\documentclass[12pt]{article}

\usepackage{graphics,amsthm, amsmath, amssymb}

\usepackage{color,caption}

\usepackage{graphics}

\usepackage{graphicx}
\usepackage[normalem]{ulem}

\oddsidemargin -1mm
\evensidemargin -1mm
\topmargin 0mm
\headheight 10pt
\headsep 1.5cm
\footskip 30pt
\textheight 215mm
\textwidth 155mm
\columnsep 10pt
\columnseprule 0pt
\topsep 1pt plus 2pt minus 4pt
\itemsep 1pt plus 2pt minus 1pt
\marginparwidth 0pt
\oddsidemargin .5cm
\evensidemargin .5cm

\marginparsep 0pt
\topmargin -.5cm
\sloppy

\expandafter\chardef\csname pre amssym.def at\endcsname=\the\catcode`\@
\catcode`\@=11

\def\undefine#1{\let#1\undefined}
\def\newsymbol#1#2#3#4#5{\let\next@\relax
 \ifnum#2=\@ne\let\next@\msafam@\else
 \ifnum#2=\tw@\let\next@\msbfam@\fi\fi
 \mathchardef#1="#3\next@#4#5}
\def\mathhexbox@#1#2#3{\relax
 \ifmmode\mathpalette{}{\m@th\mathchar"#1#2#3}%
 \else\leavevmode\hbox{$\m@th\mathchar"#1#2#3$}\fi}
\def\hexnumber@#1{\ifcase#1 0\or 1\or 2\or 3\or 4\or 5\or 6\or 7\or 8\or
 9\or A\or B\or C\or D\or E\or F\fi}

\font\tenmsa=msam10
\font\sevenmsa=msam7
\font\fivemsa=msam5
\newfam\msafam
\textfont\msafam=\tenmsa
\scriptfont\msafam=\sevenmsa
\scriptscriptfont\msafam=\fivemsa
\edef\msafam@{\hexnumber@\msafam}
\mathchardef\dabar@"0\msafam@39
\def\dashrightarrow{\mathrel{\dabar@\dabar@\mathchar"0\msafam@4B}}
\def\dashleftarrow{\mathrel{\mathchar"0\msafam@4C\dabar@\dabar@}}

\font\tenmsb=msbm10
\font\sevenmsb=msbm7
\font\fivemsb=msbm5
\newfam\msbfam
\textfont\msbfam=\tenmsb
\scriptfont\msbfam=\sevenmsb
\scriptscriptfont\msbfam=\fivemsb
\edef\msbfam@{\hexnumber@\msbfam}


\theoremstyle{plain}
\newtheorem{theorem}{Theorem}[section]
\newtheorem{lemma}[theorem]{Lemma}

\newtheorem{corollary}[theorem]{Corollary}

\theoremstyle{definition}

\newtheorem{example}[theorem]{Example}



\def\deg{{\rm deg}}

\def\cP{{\mathcal P}}

\def\cc{{\mathcal C}}



\def\qed{\hfill  \framebox(5,5){}}

\def\paraalg{\vspace{2 mm}}

\def\R{{\mathbb R}}
\def\C{{\mathbb C}}

\def\lim{{\rm lim}}


\def\deg{{\rm deg}}

\def\cP{{\mathcal P}}

\def\cc{{\mathcal C}}

\def\ii{{\mathrm i}}

\def\P{{\mathbf P}}
\def\cP{{\mathbf P}}
\def\c{{\mathbf C}}
\def\cc{{\mathbf C}}
\def\S{{\mathbf S}}
\def\crit{{\rm Crit}(\P)}
\def\qed{\hfill  \framebox(5,5){}}
\def\para{\vspace{1 mm}}
\def\C{{\mathbb C}}
\def\ii{{\emph \,i}\,}
\def\r{{\mathbf r}}
\def\cir{{\mathbf c}{\mathbf i}{\mathbf r}{\mathbf c}}

\begin{document}

\title{Missing sets in rational parametrizations of surfaces of revolution
\thanks{rafael.sendra@uah.es, sevillad@unex.es, carlos.villarino@uah.es} }

\author{
 J. Rafael  Sendra, Carlos Villarino \\
Dpto. de Fisica y Matem\'aticas,
        Universidad de Alcal\'a \\
      E-28871 Madrid, Spain  \\
\and
David Sevilla \\
Centro Universitario de M\'erida \\ Av. Santa Teresa de Jornet 38 \\
E-06800, M\'erida, Badajoz, Spain
}
\date{}          
\maketitle

\begin{abstract}
Parametric representations do not cover, in general, the whole geometric object that they parametrize. This can be a problem in practical applications. In this paper we analyze the question for surfaces of revolution generated by real rational profile curves, and we describe a simple small superset of the real zone of the surface not covered by the parametrization. This superset consists, in the worst case, of the union of a circle and the mirror curve of the profile curve.
\end{abstract}

\section{Introduction}

Parametric representations of structured surfaces like ruled surfaces, surfaces of revolution or swept surfaces are often used in computer graphics, CAD/CAM, and surface/geometric modelling (see e.g. \cite{Agoston}, \cite{marsh}).  Nevertheless, when working with parametric instead of implicit representations, one must take into account that some information of the geometric object can be missed. More precisely, the parametrization may not cover the whole object, that is, some part of the object may not be reachable by giving values to the parameters; for instance, the curve parametrization $(\frac{2t}{t^2+1},\frac{t^2-1}{t^2+1})$ covers the unit circle with the exception of the point $(0,1)$. A curve parametrization may miss, at most, one point, called the critical point (see \cite{andradas} or  \cite{Sendra2002a}). However, a surface parametrization may miss finitely many curves and finitely many points; this is a consequence of the fact that the image of the  parametrization is a constructible set of the surface.  We will refer to the uncovered part as the \emph{missing set} of the parametrization.

We observe that the phenomenon described above can be seen as a particular case of the geometric covering problem (see e.g. \cite{Shragai}), in the sense that the image of the  parametrization is the subset that one guard covers, and the missing set is the inspection location to be covered by other guards.

Parametrizations with nonempty missing sets can be a problem in practical applications if there is relevant information outside the covered part. Examples of this claim can be found in \cite{SendraSevillaVillarino2014b} (for the computation of intersections), in \cite{SendraSevillaVillarino2014C} (for estimating Hausdorff distances) or \cite{SendraSevillaVillarino2014a} (for the analysis of cross sections).
One way to deal with this difficulty is to find parametrizations that do cover the whole object. In the curve case, there are algorithmic methods for that (see \cite{Sendra2002a}). However, the situation for surfaces is much more complicated, and, at least to our knowledge, it is an open problem. Instead, one may use other alternatives. For instance, in \cite{bajaj}, \cite{SendraSevillaVillarino2014a}, \cite{SendraSevillaVillarino2014C},
the authors compute finitely many parametrizations such that their images cover all the surface. Another possibility is to have a precise description of the missing set of the parametrization, or a subset of the surface containing the missing set; a subset of the surface, containing the missing set and having dimension smaller than 2, is called a \emph{critical set}. In this way, for a practical application one can use the parametrization and then decide the existence of relevant points in the critical set.

The last strategy can be approached by using elimination theory techniques (see \cite{SendraSevillaVillarino2014b}). Nevertheless, although theoretically possible, the direct use of these techniques produces, in general, huge critical sets and requires solving systems of algebraic equations. As a consequence, the method turns to be inefficient in practice. However, when working with structured surfaces, a preliminary analysis of the structure can help to describe quickly and easily a critical set. For instance, in \cite{SendraSevillaVillarino2014C}, we show that any rational ruled surface can be parametrized so that the critical set is a line which is easily computable from the parametrization. In this paper, we analyze the case of surfaces of revolution given by means of a real plane curve parametrization known as a profile. We prove that a critical set for the real part of a  surface of revolution is, in the worst case, the union of a curve (the mirror curve of the profile curve) and a circle passing through the critical point of the profile curve; see Table \ref{tabla}. As a direct criterion (see Corollary \ref{cor}), we obtain that any parametrization of a symmetric real curve with at least one polynomial component generates all the real part of the surface of revolution.

As we will see in the subsequent sections this critical set is indeed very simple to compute from the profile curve parametrization. An additional advantage of our method is that it does not require that the parametrization of the surface is proper (i.e. injective), while the direct application of elimination techniques needs to compute the inverse of the parametrization, and hence requires that the surface parametrization is proper.

The rest of the paper is organized as follows. In Section \ref{sec-results} we present the main results of the paper. The proofs of these results appear in the appendix. In Section \ref{sec-algorithm} we outline the algorithmic methods derived from the theoretical results, and we illustrate them by some examples. Future work on the topic is discussed in Section \ref{sec-extensions}. The paper ends with a brief conclusion.

Computations were performed with the mathematical software Maple 18. Plots were generated with Maple and Surfer.

\section{Results}\label{sec-results}

Let $\c^{\rm P}$ be a curve (\emph{profile curve})  in the $(y,z)$-plane parame\-tri\-zed by $\r^{\rm P}(t)=(0,p(t),q(t))$, where $p(t), q(t)$ are rational functions with real coefficients; the results presented here are also valid if the coefficients are complex numbers but for simplicity, and because of the interest in applications, we limit the setting to the real case. In addition, we assume that $\r^{\rm P}$ is proper, that is, injective. We observe that every non-proper parametrization can be reparametrized into a proper one (see for example Section 6.1. in \cite{SWP}).
Also let $\S$ be the surface of revolution generated by rotating $\cc^{\rm P}$ around the $z$-axis. We exclude the trivial case where $\cc^{\rm P}$ is a line parallel to the $y$ axis, in which $\S$ is a plane. The classical parametrization of $\S$, obtained from $\r^{\rm P}(t)$, is
\[
 \P(s,t)=\left(\frac{2s}{1+s^2} \, p(t),\frac{1-s^2}{1+s^2} \, p(t),q(t)\right).
\]
Observe that properness is assumed in the profile parametrization $\r^{\rm P}(t)$ but not in $\P(s,t)$; see Example 2.3 in \cite{ARSTV} for an example where $\P(s,t)$ is non-proper and $\r^{\rm P}(t)$ is proper.

In addition, we consider the parametric curve $\c^{{\rm M}}$ (called \emph{mirror curve of $\c^{\rm P}$})  parametrized as
$\r^{\rm M}(t)=(0,-p(t),q(t))$. Observe that $\c^{\rm P}=\c^{\rm M}$ if and only if $\c^{\rm P}$ symmetric with respect to the $z$-axis. For instance, the parabola $y=z^2$ is equal to its mirror curve while the cubic $y=z^3$ is not. Finally, we represent by
$\cir(\alpha,c)$
 the circle of radius $|\alpha|$ in the plane $z=c$ centered at $(0,0,c)$, that is, the curve parametrized as
\[
 \left(\frac{2s}{1+s^2} \, \alpha,\frac{1-s^2}{1+s^2} \, \alpha, c\right).
\]
Observe that $\cP(s,t_0)$ is $\cir(p(t_0),q(t_0))$, i.e. the {\em  cross section circle} of the surface $\S$ of revolution passing through $(0,p(t_0),q(t_0))$.

Before, stating our main results, we need to recall the notion of normal (i.e. surjective) curve parametrization and critical point, for further details see \cite{Sendra2002a} and \cite{andradas}. We say that a curve parametrization $\r(t)$ is normal if all points on the curve are reachable by $\r(t)$ when $t$ takes values in the field of the complex numbers. The theory establishes that a proper curve parametrization can miss at most one point. This point is called the {\em critical point}, and it can be seen, in the case of real functions, as the limit, when $t$ goes to $\infty$, of the parametrization; understanding that if this limit does not exist then there is no critical point and the parametrization is normal. For instance, $(t,t^2)$ or $(t,1/t)$ are normal, but the circle parametrization $$\left(\frac{2t}{t^2+1},\frac{t^2-1}{t^2+1}\right)$$ has the {\it East pole} $(0,1)$ as  critical point; indeed, it is not reachable and hence the parametrization is not normal.
In this situation, we are ready to established our main results (see the Appendix for the formal proof of the results).

\subsection{The real case}\label{subsec-real-case}

We describe now a critical set of the real part of $\S$ that $\cP(s,t)$ does not cover. For this purpose, we distinguish whether the profile curve is symmetric or not. The next theorem states that, in the symmetric case, at most one point can be missed in the real part of the surface of revolution.

\para

\begin{theorem}\label{theorem-sym-real} {\sf [Symmetric real case]} Let $\c^{\rm P}$ be symmetric.
\begin{enumerate}
\item If $\r^{\rm P}(t)$ is normal, the empty set is a real-critical set of $\cP(s,t)$.
\item If $\r^{\rm P}(t)$ is not normal, and $(b,c)$ is its critical point, then $\{(0,b,c)\}$ is a real-critical set of $\cP(s,t)$.
\end{enumerate}
\end{theorem}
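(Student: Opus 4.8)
The plan is to reduce the surjectivity question for $\cP(s,t)$ onto the real part of $\S$ to two one‑parameter surjectivity questions: one ``along the meridians'' (the $t$‑direction, governed by $\r^{\rm P}$) and one ``around the circles'' (the $s$‑direction, governed by the standard rational parametrization of the circle). Fix a real point $P_0=(x_0,y_0,z_0)\in\S$. Since $P_0$ lies on the surface of revolution, it lies on some cross section circle $\cir(\alpha,c)$ with $c=z_0$ and $\alpha^2=x_0^2+y_0^2$; moreover $(0,\alpha,c)$ (or $(0,-\alpha,c)$, which is the corresponding point of the mirror curve $\c^{\rm M}$) is a real point of $\c^{\rm P}$. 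The first step is therefore: because $\r^{\rm P}$ is a proper real parametrization of $\c^{\rm P}$, in the normal case every real point of $\c^{\rm P}$ is of the form $(0,p(t_0),q(t_0))$ for some $t_0\in\R$, so we may write $c=q(t_0)$ and $\alpha=\pm p(t_0)$.

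Next I would handle the $s$‑direction. For the fixed $t_0$, the partial parametrization $s\mapsto\cP(s,t_0)$ is exactly the circle parametrization $\cir(p(t_0),q(t_0))$, i.e.
\[
\left(\tfrac{2s}{1+s^2}\,p(t_0),\ \tfrac{1-s^2}{1+s^2}\,p(t_0),\ q(t_0)\right),
\]
whose only possibly‑missed real point is its critical (East pole) point $(0,-p(t_0),q(t_0))$, attained in the limit $s\to\infty$. Hence if $\alpha=p(t_0)$ and $(x_0,y_0)\ne(0,-p(t_0))$, then $P_0$ is covered. The remaining real points on this circle are $(0,-p(t_0),q(t_0))$; but by symmetry of $\c^{\rm P}$ we have $\c^{\rm P}=\c^{\rm M}$, so $(0,-p(t_0),q(t_0))$ is again a real point of $\c^{\rm P}$, hence equals $(0,p(t_1),q(t_1))$ for some $t_1\in\R$ (using normality again), and then $\cP(0,t_1)=(0,p(t_1),q(t_1))=(0,-p(t_0),q(t_0))=P_0$. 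This closes part (1): every real point of $\S$ is reached, so the empty set is a real‑critical set.

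For part (2), the only change is that $\r^{\rm P}$ may miss its critical point $(0,b,c)=\lim_{t\to\infty}\r^{\rm P}(t)$. Rerunning the argument, a real point $P_0$ of $\S$ fails to be covered only if the relevant meridian point on $\c^{\rm P}$ is exactly this critical point; since $\c^{\rm P}$ is symmetric, the mirror point is $(0,-b,c)$, which lies on the same cross section circle $\cir(b,c)$, so $P_0$ must lie on that one circle. On that circle all points except possibly the East pole are covered via some $\cP(s,t_0)$ with $q(t_0)=c$, $p(t_0)=\pm b$; and the East pole coincides with the other of $(0,\pm b,c)$, which is reached by $\cP(0,t_1)$ whenever the corresponding parameter value $t_1$ is finite. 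The one point that can genuinely escape every possibility is $(0,b,c)$ itself, giving the real‑critical set $\{(0,b,c)\}$. I expect the main obstacle to be the bookkeeping of which of the two ``poles'' $\pm\alpha$ of a given cross section circle is the East pole for which choice of meridian point, and making fully rigorous the claim that on the single bad circle every point other than $(0,b,c)$ still admits either a finite $s$ with the good sign or the substitution $\cP(0,\cdot)$ with a finite $t$; this requires carefully invoking that a proper real parametrization misses at most the limit point at $t=\infty$ and nothing else, and that normality of $\r^{\rm P}$ (resp.\ failure only at $(b,c)$) transfers correctly through the sign flip built into the mirror curve.
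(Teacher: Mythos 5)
Your argument follows essentially the same route as the paper: reduce to the cross--section circle through the given point, observe that $s\mapsto\P(s,t_0)$ covers that circle except for its East pole $(0,-p(t_0),q(t_0))$, and use symmetry to recover that pole as a point of $\c^{\rm P}$ reachable by another parameter value (or to identify it with the critical point). Two points need repair before the proof is fully rigorous. First, normality in this paper means reachability for $t$ ranging over $\C$, not over $\R$; a real point of $\c^{\rm P}$ (e.g.\ an isolated real singularity) may only be reached with a complex parameter, so your claim that every real point of $\c^{\rm P}$ equals $\r^{\rm P}(t_0)$ for some $t_0\in\R$ is false as stated. This is harmless for the theorem, which concerns reachability of $\P$ over $\C^2$: since the point is real, $p(t_0)$ and $q(t_0)$ are real even when $t_0$ is not, and the circle argument goes through verbatim with $t_0\in\C$; but the restriction to real parameters should be dropped. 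Second, the assertion that $(0,\pm\alpha,z_0)\in\c^{\rm P}$ for every real $(x_0,y_0,z_0)\in\S$ with $\alpha^2=x_0^2+y_0^2$ is not automatic from the words ``surface of revolution'' ($\S$ is an algebraic surface, a Zariski closure); it is Lemma~\ref{lema puntos en c} of the paper, and in the symmetric case it follows from the implicit equation $F(x,y,z)=A(x^2+y^2,z)$ together with $f(y,z)=A(y^2,z)$. You should also treat explicitly the degenerate case $\alpha=0$ (points of $\S$ on the axis), where the cross section collapses to a point and $\P(s,t_0)$ is constant in $s$; your bookkeeping covers it only implicitly. With these adjustments the case analysis, including part (2) where the only possible escape is the critical point $(0,b,c)$ because a proper parametrization misses at most that one point, is correct and coincides with the paper's proof.
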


Based on the previous theorem and on Theorem 2.8 in \cite{Gao} one has the following corollary.

\begin{corollary}\label{cor}
If $\r^{\rm P}$ is symmetric, and at least one of its components has a numerator of degree greater than the degree of the denominator, then $\P(s,t)$ covers all $\S$.
\end{corollary}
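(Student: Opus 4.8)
The plan is to obtain Corollary~\ref{cor} as an immediate consequence of part~(1) of Theorem~\ref{theorem-sym-real}. Since $\c^{\rm P}$ is symmetric by hypothesis, the only thing that needs to be checked is that the degree condition on the components of $\r^{\rm P}(t)$ forces $\r^{\rm P}(t)$ to be a normal parametrization. Once this is established, Theorem~\ref{theorem-sym-real}(1) says that the empty set is a real-critical set of $\cP(s,t)$, which is exactly the assertion that $\cP(s,t)$ reaches every point of $\S$.

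So I would concentrate on the implication: \emph{if at least one of $p(t),q(t)$ has a numerator of degree strictly greater than the degree of its denominator, then $\r^{\rm P}(t)$ is normal}. For this I would invoke Theorem~2.8 in \cite{Gao}, which characterizes when a proper rational curve parametrization is normal; it applies here because $\r^{\rm P}$ is proper by our standing assumptions. The geometric reason behind the implication is the description of the critical point recalled in Section~\ref{sec-results}: the only point $\r^{\rm P}(t)$ could fail to reach is $\lim_{t\to\infty}\r^{\rm P}(t)$, and if, say, $p(t)$ has numerator degree larger than denominator degree, then $p(t)\to\infty$ as $t\to\infty$, so this limit is not an affine point of $\c^{\rm P}$ and hence nothing is missed.

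Combining the two pieces then finishes the argument: the degree hypothesis gives normality of $\r^{\rm P}$, and Theorem~\ref{theorem-sym-real}(1), applied to the symmetric profile $\c^{\rm P}$, gives that $\cP(s,t)$ covers all of $\S$. I expect the only (mild) obstacle to be bookkeeping: making sure that "numerator degree greater than denominator degree for one component" is literally a situation covered by the hypotheses of Theorem~2.8 in \cite{Gao}, in particular handling the lowest-terms normalization of $p$ and $q$ and the properness assumption correctly. Beyond that verification, no computation is needed.
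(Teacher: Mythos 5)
Your proposal is correct and follows exactly the route the paper intends: the paper derives the corollary from Theorem~\ref{theorem-sym-real}(1) combined with Theorem~2.8 of \cite{Gao}, using precisely the observation that the degree condition forces $\lim_{t\to\infty}\r^{\rm P}(t)$ to fail to exist as an affine point, so that $\r^{\rm P}$ is normal and the real-critical set is empty. No divergence from the paper's argument.
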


The next theorem states that, in the non-symmetric case, the missing  real part of the surface of revolution is included in the union of the mirror curve and either the critical point of the profile curve or a cross-section circle.

\para

\begin{theorem}\label{theorem-non-sym-real} {\sf [Non-Symmetric real case]} Let $\c^{\rm P}$ be non-symmetric.
\begin{enumerate}
\item If $\r^{\rm P}(t)$ is normal, $\c^{\rm M}$ is a real-critical set of $\cP(s,t)$.
\item If $\r^{\rm P}(t)$ is not normal, and $(0,b,c)$ its critical point, then a real-critical set of $\cP(s,t)$ is $\c^{\rm M}$ if $(0,-b,c)\in \c^{\rm P}$, otherwise $\c^{\rm M}\cup \cir(b,c)$ is.
\end{enumerate}
\end{theorem}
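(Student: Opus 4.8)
The plan is to reduce the statement to a single well‑understood fact about rational parametrizations of circles, combined with the theory of normal and critical points of proper real rational curve parametrizations recalled above, and then to run a short case analysis by contrapositive: I will show that every real point of $\S$ lying outside the claimed set is reached by $\P$ for real values of the parameters.

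First I would record two preliminary facts. (a) For a fixed real $t$ with $p(t)\neq 0$, the map $s\mapsto\P(s,t)$ is, up to the nonzero scalar $p(t)$, the standard rational parametrization of the unit circle, so it covers the entire cross‑section circle $\cir(p(t),q(t))$ with the single exception of the point obtained as $s\to\infty$, namely $(0,-p(t),q(t))=\r^{\rm M}(t)$; and when $p(t)=0$ the circle degenerates to the point $(0,0,q(t))$, which $\P(\cdot,t)$ does cover. Hence for every real $t$ the part of $\cir(p(t),q(t))$ not reached by $\P$ is contained in $\{\r^{\rm M}(t)\}\subseteq\c^{\rm M}$. (b) Since $\r^{\rm P}$ is proper and has real coefficients, every real point of $\c^{\rm P}$ is either its critical point $(0,b,c)$ — which exists exactly when $\r^{\rm P}$ is not normal — or is of the form $\r^{\rm P}(t)$ for a unique \emph{real} $t$ (a non‑real preimage $t$ would force its conjugate $\bar t$ to map to the same real point, contradicting injectivity). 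Rotating $\c^{\rm P}$, it follows that the real part of $\S$ equals $\bigcup_{t\in\R}\cir(p(t),q(t))$ when $\r^{\rm P}$ is normal, and that union together with $\cir(b,c)$ otherwise: a real point $(x_0,y_0,z_0)$ of $\S$ lies on some cross‑section circle $\cir(p(t_0),q(t_0))$, $t_0\in\C$ (these circles sweep out $\S$), and from $p(t_0)^2=x_0^2+y_0^2\ge 0$ and $q(t_0)=z_0$ one sees that $\r^{\rm P}(t_0)$ is itself a real point of $\c^{\rm P}$, so (b) applies to it. Along the way this shows $\c^{\rm M}\subseteq\S$ and $\cir(b,c)\subseteq\S$, each of dimension at most $1$, hence each a legitimate candidate for a real‑critical set of the surface $\S$.

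Now the case analysis. \emph{Part (1)}: let $P_0$ be a real point of $\S$ with $P_0\notin\c^{\rm M}$; by (b) it lies on $\cir(p(t_0),q(t_0))$ for some real $t_0$, and by (a) either $\P$ reaches $P_0$ or $P_0=\r^{\rm M}(t_0)\in\c^{\rm M}$ — the latter being excluded, $P_0$ is reached, so $\c^{\rm M}$ is a real‑critical set. \emph{Part (2)}, with critical point $(0,b,c)$: take $P_0$ a real point of $\S$ outside the claimed set. If $(0,-b,c)\notin\c^{\rm P}$, the claimed set is $\c^{\rm M}\cup\cir(b,c)$, so by (b) $P_0$ lies on a real cross‑section circle and the one‑line argument of Part (1) applies verbatim. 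If $(0,-b,c)\in\c^{\rm P}$, the claimed set is $\c^{\rm M}$; by (b), $P_0$ lies either on a real cross‑section circle, handled as before, or on $\cir(b,c)$. In the latter case $b\neq 0$ — otherwise $\cir(b,c)=\{(0,0,c)\}$, which is its own mirror point and lies on $\c^{\rm P}$, hence on $\c^{\rm M}$, contradicting $P_0\notin\c^{\rm M}$ — so by (b) we may write $(0,-b,c)=\r^{\rm P}(t_1)$ with $t_1$ real; then $\cir(b,c)=\cir(p(t_1),q(t_1))$, and by (a) $\P$ reaches $P_0$ unless $P_0=\r^{\rm M}(t_1)=(0,b,c)\in\c^{\rm M}$, which is excluded. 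This exhausts all cases.

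The step I expect to be the genuine obstacle is fact (b): pinning down the real part of $\S$ as the union of the real cross‑section circles together with at most the single limit circle $\cir(b,c)$. This is precisely where properness of $\r^{\rm P}$ and the described theory of normal and critical points of proper real rational curve parametrizations are indispensable; without properness a real point of $\S$ could sit on a cross‑section circle attached to a genuinely complex parameter value, and the stated bound would break. The non‑symmetry hypothesis, by contrast, is not used for the containment; it only makes the bound essentially sharp, since $\c^{\rm P}\neq\c^{\rm M}$ forces $\c^{\rm P}\cap\c^{\rm M}$ to be finite, and hence, by (a), a generic point $\r^{\rm M}(t)$ of $\c^{\rm M}$ to be genuinely missed by $\P$.
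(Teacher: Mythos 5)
Your overall strategy coincides with the paper's: slice $\S$ into cross--section circles, use the fact that for real $t_0$ with $p(t_0)\neq 0$ the map $s\mapsto\P(s,t_0)$ covers $\cir(p(t_0),q(t_0))$ except for the mirror point $\r^{\rm M}(t_0)$, and use properness plus conjugation to see that real points of $\c^{\rm P}$ have real parameter values unless they are the critical point. Your facts (a) and the case analysis at the end are correct and are, in substance, the paper's Lemmas \ref{lemma-pto-1} and \ref{lema circulo alcanzable} together with the concluding discussion of the appendix. The problem is the second half of your fact (b), which you rightly single out as the crux but then do not prove: you justify the containment of the real part of $\S$ in $\bigcup_{t\in\R}\cir(p(t),q(t))\cup\cir(b,c)$ by asserting that every real point of $\S$ ``lies on some cross--section circle $\cir(p(t_0),q(t_0))$, $t_0\in\C$ (these circles sweep out $\S$)''. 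As stated this is false: $\S$ is an algebraic surface, i.e.\ a Zariski closure, and the circle generated by the critical point lies on no $\cir(p(t_0),q(t_0))$ with $t_0\in\C$. For instance, in Example \ref{ex-19} the whole circle $\cir(1,1)$ sits at height $z=1$ while $q(t_0)=1$ has no solution, so $\cir(1,1)\subset\S$ is disjoint from every cross--section circle. Worse, if the sweeping claim were taken at face value your argument would never produce the term $\cir(b,c)$ at all: a point on an honest circle $\cir(p(t_0),q(t_0))$ has $\r^{\rm P}(t_0)$ reachable by definition, so the critical--point branch of (b) would never fire.

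What is actually needed, and what the appendix spends most of its effort establishing, is the a priori statement that for \emph{every} point $(x_0,y_0,z_0)\in\S$ with $x_0^2+y_0^2=\alpha^2$, at least one of $(0,\pm\alpha,z_0)$ lies on $\c^{\rm P}$, together with the rotation--invariance of the level sets of $\S$. The paper obtains this from the explicit implicit equation $F=A^2(x^2+y^2,z)-(x^2+y^2)B^2(x^2+y^2,z)$ and its factorization $\bigl(A(\alpha^2,z_0)-\alpha B(\alpha^2,z_0)\bigr)\bigl(A(\alpha^2,z_0)+\alpha B(\alpha^2,z_0)\bigr)$ (Lemmas \ref{lemma-level} and \ref{lema puntos en c}); only then do properness and reality convert the axial point into either a real parameter value or the critical point, which is where $\cir(b,c)$ legitimately enters. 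Until you supply an argument of this kind, the containment that drives your whole proof is unestablished, so the proposal has a genuine gap at exactly the step you flagged.
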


 \subsection{The complex case}\label{subsec-complex-case}

 Next, we describe a critical set when the revolution surface in embedded in the complex space. The next theorem states that, in the (complex)  case, besides the real missing part introduced in Theorems \ref{theorem-sym-real} and \ref{theorem-non-sym-real}, one may miss pairs of complex lines settled at each (real or complex) intersection of $\c^{\rm P}$ with the $z$-axis.

\para

\begin{theorem}\label{theorem-complex} {\sf [Complex case]} Let ${\cal A}$ be the real critical set of $\cP(s,t)$ provided by Theorems \ref{theorem-sym-real} and \ref{theorem-non-sym-real}, and let $J$ be the set of all (real and complex) $z$-coordinates of the intersection points of $\c^{\rm P}$ with the $z$-axis. Then, a complex-critical set of $\cP(s,t)$ is
\[ {\cal A}\,\bigcup_{\lambda \in J} \{(t,\pm \ii t,\lambda) \,|\, t\in \C \}. \]
\end{theorem}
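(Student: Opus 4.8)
The plan is to analyze which points of the complex surface $\S$ fail to be in the image of $\cP(s,t)$, starting from the identity $\cP(s,t_0)=\cir(p(t_0),q(t_0))$. A point $(x_0,y_0,z_0)\in\S$ lies on some cross-section circle, and we must ask for which $t_0$ with $q(t_0)=z_0$ (if any) and which $s$ one has $\cP(s,t_0)=(x_0,y_0,z_0)$. First I would separate the analysis into the two mechanisms by which a point is missed: either the value of $t$ needed to reach the relevant profile point is itself unreachable by $\r^{\rm P}$ (the East-pole-at-infinity phenomenon in the $t$-direction), or a given $t_0$ is available but the value of $s$ needed to hit the point on $\cir(p(t_0),q(t_0))$ is unreachable (the East-pole phenomenon in the $s$-direction). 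The first mechanism is exactly what Theorems \ref{theorem-sym-real} and \ref{theorem-non-sym-real} already account for — it contributes the set ${\cal A}$ — so the new content is entirely the second mechanism together with its interaction with the degenerate fibres where the cross-section circle collapses.

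Next I would carry out the $s$-direction analysis. For fixed $t_0$ the map $s\mapsto \cP(s,t_0)$ is, up to the scaling factor $p(t_0)$, the standard rational parametrization of the unit circle in the plane $z=q(t_0)$; when $p(t_0)\neq 0$ this is a genuine conic and it misses exactly its East pole $(0,p(t_0),q(t_0))$ — but that point is reached from another parameter $t_1$ whenever the profile curve meets the line $y=-p(t_0)$, $z=q(t_0)$, which over $\C$ it generically does; the residual obstruction here is precisely the mirror curve $\c^{\rm M}$ already included in ${\cal A}$. The genuinely new case is $p(t_0)=0$: then $\cir(0,q(t_0))$ degenerates to the single point $(0,0,q(t_0))$, and the fibre $\{\cP(s,t_0):s\in\C\}$ is just that point, so the surface near $z=q(t_0)$ must be reached some other way. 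Here I would invoke that the implicit equation of $\S$ in the variables $x,y,z$ is obtained by substituting $y^2\mapsto x^2+y^2$ into (a cleared form of) the implicit equation of $\c^{\rm P}$; at a height $\lambda=q(t_0)$ with $p(t_0)=0$, i.e. at an intersection of $\c^{\rm P}$ with the $z$-axis, the factor $x^2+y^2=(x+\ii y)(x-\ii y)$ vanishes, and this forces the pair of complex lines $\{(t,\pm\ii t,\lambda):t\in\C\}$ to lie on $\S$; one checks these lines are swept only in the limit $s\to\pm\ii$ together with an appropriate $t\to t_0$, i.e. they are not in the image. This is where the set $\bigcup_{\lambda\in J}\{(t,\pm\ii t,\lambda):t\in\C\}$ enters, and $J$ must include complex roots of $p$ because $t_0$ ranges over $\C$.

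Finally I would assemble the pieces: show that every point of $\S$ outside ${\cal A}\cup\bigcup_{\lambda\in J}\{(t,\pm\ii t,\lambda):t\in\C\}$ is hit, by taking such a point, solving $q(t)=z_0$ for a $t_0$ that is actually attained by $\r^{\rm P}$ (possible since the only unattained $t$-value corresponds to the critical point, already in ${\cal A}$), checking $p(t_0)\neq 0$ (else $z_0\in J$ or $(x_0,y_0,z_0)$ is the isolated axis point, handled separately), and then solving for $s$ on the now-nondegenerate cross-section circle, the only failure being the East pole which lands on $\c^{\rm M}\subseteq{\cal A}$. The main obstacle I anticipate is the bookkeeping at the special fibres: rigorously matching the degeneration of $\cir(p(t_0),q(t_0))$ as $p(t_0)\to 0$ with the appearance of the complex lines, and making sure no further components are missed when both $p(t_0)=0$ and the profile curve is singular or tangent to the $z$-axis there — this requires care with multiplicities in the resultant/elimination step rather than any deep new idea.
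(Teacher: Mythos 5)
Your plan is correct and follows essentially the same route as the paper's proof: a cross-section analysis in which each nondegenerate circle $\cir(p(t_0),q(t_0))$ is covered by $s\mapsto\P(s,t_0)$ except for one pole lying on $\c^{\rm M}$ (falling back to the whole cross-section circle at the critical point when the required profile point is unreachable by $\r^{\rm P}$), while the degenerate fibres at axis intersections contribute the isotropic line pairs via the factorization $x^2+y^2=(x+\ii y)(x-\ii y)$ in the implicit equation $F=A(x^2+y^2,z)$ or $A^2-(x^2+y^2)B^2$. Two small corrections that do not affect the structure: the point missed by $s\mapsto\P(s,t_0)$ is $(0,-p(t_0),q(t_0))=\r^{\rm M}(t_0)$, not $(0,p(t_0),q(t_0))$ as written (your subsequent reasoning already uses the correct one), and it is not ``generic'' over $\C$ that $\c^{\rm P}$ passes through this mirror point --- precisely its failure to do so is why $\c^{\rm M}$ belongs to the critical set.
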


\section{Algorithmic framework}\label{sec-algorithm}

The theorems presented in Section \ref{sec-results} provide algorithmic processes to cover the surface of revolution $\S$. In the following we outline them.

\paraalg

\hrule

\para

\noindent {\bf Algorithm 1:} Covering the real part of a surface of revolution
\hrule

\para

\noindent {\bf Input:} a real proper parametrization $\r^{\rm P}(t)=(0,p(t),q(t))$ of the profile curve $\c^{\rm P}$.

\para

\noindent {\bf Output:} a real-critical set for $\r^{\rm P}$.

\begin{tabbing}
aaa \=  a \= a \= a \= a \= a \kill
\> {\sf 1.1.} \= {\bf If} \= $\c^{\rm P}=\c^{\rm M}$  \= {\bf then} \=  {\bf if} \= $\r^{\rm P}(t)$ is normal   {\bf return}  $\emptyset$ \\
\> {\sf 1.2.} \>     \> \> \> \> {\bf else} \= let $A$ be the critical point of $\r^{\rm P}(t)$   \\
\> {\sf 1.3.} \>       \> \> \> \> \> {\bf return} $\{A\}$  \\
\> {\sf 1.4.} \> \>\> \> {\bf end if}\\
\> {\sf 1.5.} \>       \>  {\bf else} \=  {\bf if}  \= $\r^{\rm P}(t)$ is normal \=  {\bf return} $\r^{\rm M}(t)$ \\
\> {\sf 1.6.} \>       \> \> \> {\bf else} \= let  $A=(0,b,c)$ be the critical point of  $\r^{\rm P}(t)$  \\
\> {\sf 1.7.} \>       \> \> \> \> {\bf if} $b=0$ \= {\bf  return} $\r^{\rm M}(t)$ \\
\> {\sf 1.8.} \>  \> \> \> \> {\bf else} \= {\bf if} \= $(0,-b,c)\in \c^{\rm P}$ \= {\bf  return} $\r^{\rm M}(t)$ \\
\> {\sf 1.9.} \>  \> \> \> \> \> \>  {\bf else return} $\r^{\rm M}(t)\cup \left(\frac{2s}{1+s^2}b,\frac{1-s^2}{1+s^2} b,c\right)$ \\
\> {\sf 1.10.} \>  \> \> \> \> \> {\bf end if} \\
\> {\sf 1.11.} \>  \> \> \> \> {\bf end if} \\
\> {\sf 1.12.}  \= \>   \>  {\bf end if} \\
\> {\sf 1.13.} \>  {\bf end if}
\end{tabbing}

\para

Let us comment some computational aspects of Algorithm 1. In  {\sf 1.1.} one needs to check whether $\c^{\rm P}=\c^{\rm M}$; for this we have several possibilities. One may apply the results in \cite{juange}, where the problem is analyzed without knowing the axis of symmetry. In our case, since this is known, namely $\{x=y=0\}$,  one may reason as follows. Let $m$ be the maximum of the degrees of $p(t)$ and $q(t)$. Since $\r^{\rm P}$ is proper, by Theorem 4.21 in \cite{SWP}, the degree of $\c^{\rm P}$ is bounded by $2m$; similarly, the degree of $\c^{\rm M}$ is also bounded by $2m$. Now, taking into account B\'ezout's theorem (see e.g. Theorem 2.48 in \cite{SWP}),  $\c^{\rm P}=\c^{\rm M}$ if and only if both curves share at least $2m+1$ common points. So,  we can proceed as follows.

\paraalg

\hrule

\para

\noindent {\bf Sub-Algorithm 1:} Checking symmetry
\hrule

\para

\noindent {\bf Input:} a real proper parametrization $\r^{\rm P}(t)=(0,p_1(t)/p_2(t),q_1(t)/q_2(t))$, with $\gcd(p_1,p_2)=\gcd(q_1,q_2)=1$, of the profile curve $\c^{\rm P}$.

\para

\noindent {\bf Output:} decision on the symmetry of $\c^{\rm P}$.

\begin{tabbing}
 aaa \= 1.11. \= \kill
\> {\sf 2.1.} \> $i:=1$, $t_0:=1$, $m:=\max\{\deg(p_1/p_2),\deg(q_1/q_2)\}$ \\
\> {\sf 2.2.} \> {\bf while} $i<2m+1$ {\bf do} \=
{\bf if} \= [$p_2(t_0)=0$ or $q_2(t_0)=0$] \\
\> {\sf 2.3.} \> \> \> $t_0:=t_0+1$ \\
\> {\sf 2.4.} \> \>  \> {\bf else} \\
\> {\sf 2.5.} \> \> \> $(\alpha,\beta):=\r^{\rm P}(t_0)$ \\
\> {\sf 2.6.} \> \> \> $h(t):=\gcd(\alpha p_2(t)+p_1(t),\beta q_2(t)-q_1(t))$ \\
\> {\sf 2.7.} \> \> \> {\bf if}    \= $\deg(h)>0$  then $i:=i+1$,  $t_0=t_0+1$. \\
\> {\sf 2.8.} \> \> \> \>     {\bf else return} $\c^{\rm P}\neq \c^{\rm M}$.\\
\> {\sf 2.9.} \> \> \>     {\bf end if} \\
\> {\sf 2.10.} \> \>    {\bf end if} \\
\> {\sf 2.11.}  \>   {\bf end do} \\
\> {\sf 2.12.} \> {\bf return} $\c^{\rm P}=\c^{\rm M}$.
\end{tabbing}

\para

Also in Step {\sf 1.1.} of Algorithm 1, one needs to check whether $\r^{\rm P}$ is normal. Recalling that $\r^{\rm P}$ is real and proper, one may proceed as follows (see Theorem 6.22. in \cite{SWP}).

\paraalg

\hrule

\para

\noindent {\bf Sub-Algorithm 2:} Checking normality
\hrule

\para

\noindent {\bf Input:} a real proper parametrization $\r^{\rm P}(t)=(0,p_1(t)/p_2(t),q_1(t)/q_2(t))$, with $\gcd(p_1,p_2)=\gcd(q_1,q_2)=1$, of the profile curve $\c^{\rm P}$.

\para

\noindent {\bf Output:} decision on the normality of $\c^{\rm P}$.

\para

\begin{tabbing}
 aaa \= 1.11. \= \kill
\> {\sf 3.1.} Compute $A:=\lim_{t \rightarrow \infty } \r^{\rm P}(t)$ \\
\> {\sf 3.2.} \= {\bf if} \= $A=(\alpha,\beta)\in \R^2$ \\
\> {\sf 3.3.} \> \>   $h(t):=\gcd(\alpha p_2(t)-p_1(t),\beta q_2(t)-q_1(t))$ \\
\>   {\sf 3.4.} \> \> {\bf if} \= $\deg(h)>0$ {\bf return} $\r^{\rm P}$ is normal \\
\> {\sf 3.5.} \> \> \> {\bf else} {\bf return} $\r^{\rm P}$ is not normal and $A$ is the critical point \\
\> {\sf 3.6.} \> \> {\bf end if} \\
\> {\sf 3.7.} \> \>  {\bf else} {\bf return} $\r^{\rm P}$ is  normal \\
\> {\sf 3.8.} \> {\bf end if}
 \end{tabbing}

\para

Finally, in Step {\sf 1.8.} one needs to check whether $(0,-b,c)\in \c^{\rm P}$, where $(0,b,c)$ is the critical point of $\r^{\rm P}$. With the information we have we can proceed as follows.

\paraalg

\hrule

\para

\noindent {\bf Sub-Algorithm 3:} Checking point
\hrule

\para

\noindent {\bf Input:} a real proper parametrization $\r^{\rm P}(t)=(0,p_1(t)/p_2(t),q_1(t)/q_2(t))$, with $\gcd(p_1,p_2)=\gcd(q_1,q_2)=1$, of the profile curve $\c^{\rm P}$; and the critical point $(0,b,c)$.

\para

\noindent {\bf Output:} decision on whether  $(0,-b,c)\in\c^{\rm P}$.

\begin{tabbing}
 aaa \= 1.11. \= \kill
\> {\sf 4.1.} \>   $h(t):=\gcd(b p_2(t)+p_1(t),c q_2(t)-q_1(t))$ \\
\> {\sf 4.2.} \>  {\bf if} \= $\deg(h)>0$ {\bf return} $(0,-b,c)\in \c^{\rm P}$. \\
\> {\sf 4.3.} \>  \> {\bf else} {\bf return} $(0,-b,c)\not\in \c^{\rm P}$. \\
\> {\sf 4.4.} \>  {\bf end if}
 \end{tabbing}

\paraalg

Similarly, we can derive the algorithm for covering the complex part of the surface.

\vspace*{2 mm}

\hrule
\para

\noindent {\bf Algorithm 2:} Covering the complex part of a surface of revolution
\hrule

\para

\noindent {\bf Input:} a real proper parametrization $\r^{\rm P}(t)=(0,p_1(t)/p_2(t),q_1(t)/q_2(t))$, with $\gcd(p_1,p_2)=\gcd(q_1,q_2)=1$, of the profile curve $\c^{\rm P}$.

\para

\noindent {\bf Output:} a set of complex parametrizations covering the complex part of the surface of revolution $\S$ generated  by $\r^{\rm P}$.

\begin{tabbing}
  a \= a \= a \= a \= a \kill
{\sf 2.1.} \=  {Apply} {\bf Algorithm 1}. Let $\cal A$ be the output. \\
{\sf 2.2.} \> {\bf return} ${\cal A} \cup \{(t,\ii t, q_1(\alpha)/q_2(\alpha))\,|\, p_2(\alpha)=0\}$.
\end{tabbing}

\para

We illustrate the algorithmic processes  by means of some examples.

\begin{example}\label{ex-11}
Let $\S$ be the surface of revolution generated by the profile curve
\[ \r^{\rm P}(t)=\left(0,\frac{t^5}{t^4+1}, \frac{t^2}{t^4+1}\right). \]
\begin{figure}[h]
\begin{center}
\includegraphics[width=6.2cm,height=6.3cm]{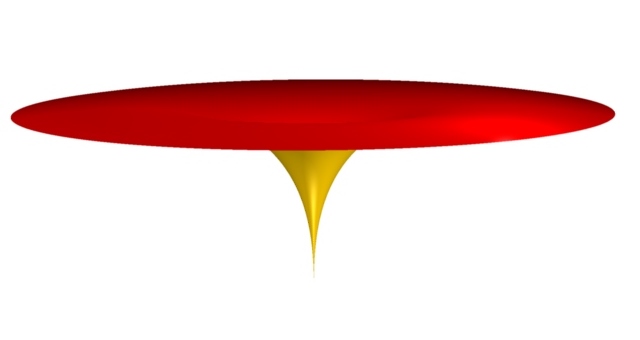}\hfill\includegraphics[width=4.5cm,height=4.5cm]{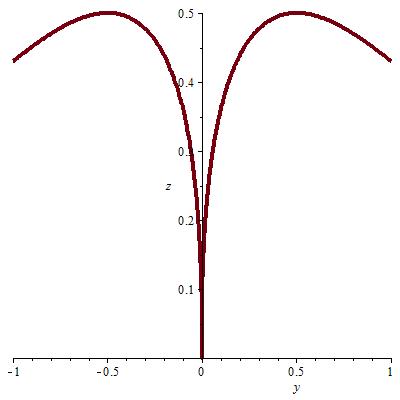}
\end{center}
\caption{Left: surface of revolution $\S$. Right: profile curve $\c^{\rm P}$ (Example \ref{ex-11})}
\end{figure}
$\c^{\rm P}=\c^{\rm M}$, and $\r^{\rm P}$ is normal. Thus all the real part of $\S$ is covered by $\P(s,t)$.
Algorithm 1 returns the empty set.
\end{example}

\begin{example}\label{ex-13}
Let $\S$ be the surface of revolution generated by the profile curve
\[ \r^{\rm P}(t)=\left(0,{\frac {t}{{t}^{4}+1}},{\frac {{t}^{2}-1}{{t}^{4}+1}}\right). \]
\begin{figure}[h]
\begin{center}
\includegraphics[width=7cm]{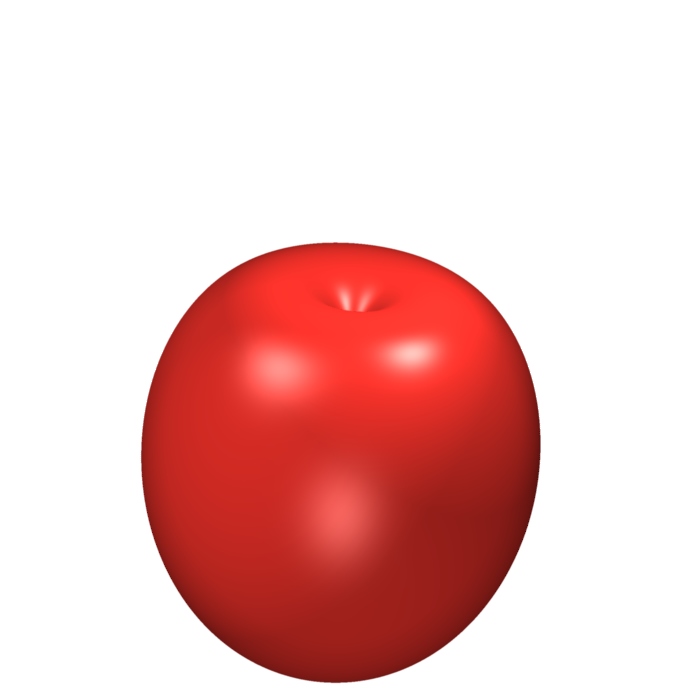}\hfill\includegraphics[width=4.2cm,height=4.2cm]{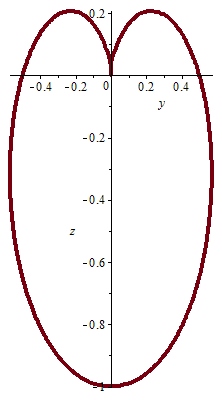}
\end{center}
\caption{Left: surface of revolution $\S$. Right: profile curve $\c^{\rm P}$ (Example \ref{ex-13})}
\end{figure}
$\c^{\rm P}=\c^{\rm M}$, and $\r^{\rm P}$ is not normal with $(0,0,0)$ as its critical point. Thus all the real part of $\S$, with the exception of the origin, is covered by $\P(s,t)$. Algorithm 1 returns $\{(0,0,0)\}$.
\end{example}

\para

\begin{example}\label{ex-15}
Let $\S$ be the surface of revolution generated by the profile curve
\[ \r^{\rm P}(t)=\left(0,{\frac {t}{{t}^{4}+1}},{\frac {{t}^{3}}{{t}^{2}+1}}\right). \]
\begin{figure}[h]
\begin{center}\includegraphics[width=5.5cm]{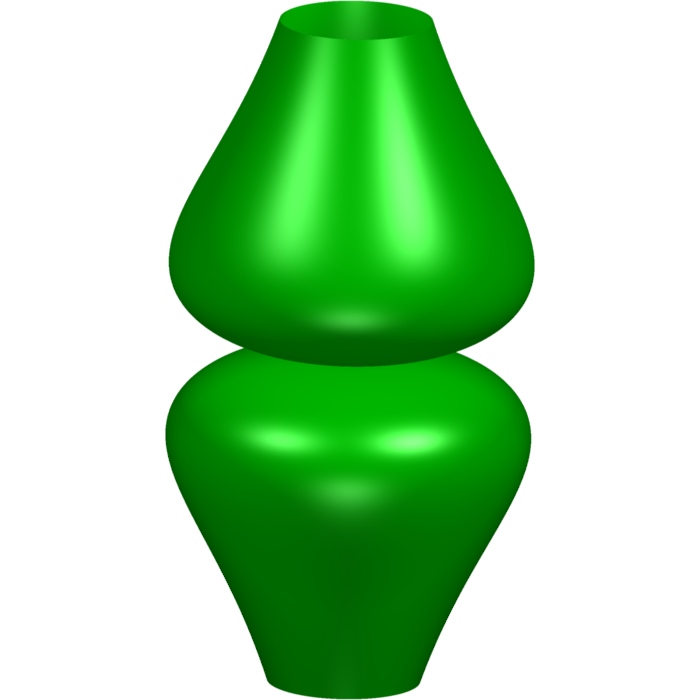}\hfill \includegraphics[width=3cm]{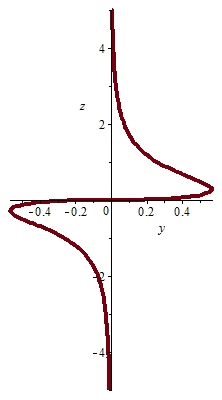}
\end{center}
\caption{Left: surface of revolution $\S$. Right: profile curve $\c^{\rm P}$ (Example \ref{ex-15})}
\end{figure}
$\c^{\rm P}\neq \c^{\rm M}$, and $\r^{\rm P}$ is  normal. Thus all the real part of $\S$, with the exception of the mirror curve, is covered by $\P(s,t)$.
Algorithm 1 returns $\r^{\rm M}$.
\end{example}

\para

\begin{example}\label{ex-19}
Let $\S$ be the surface of revolution generated by the profile curve
\[ \r^{\rm P}(t)=\left(0,{\frac {{t}^{3}}{{t}^{3}+1}},{\frac {{t}^{2}-1}{{t}^{2}+1}} \right). \]
\begin{figure}[h]
\begin{center}\includegraphics[width=6.2cm,height=6.2cm]{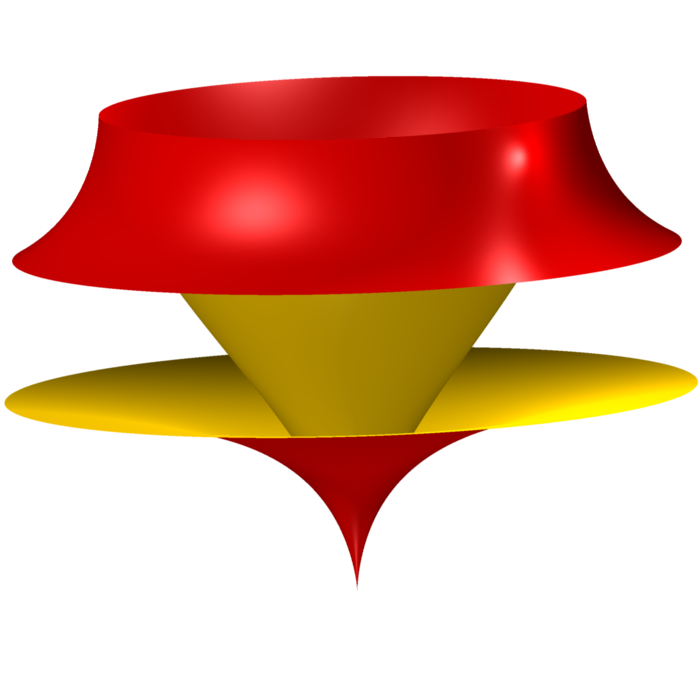}\hfill\includegraphics[width=6.3cm,height=5.6cm]{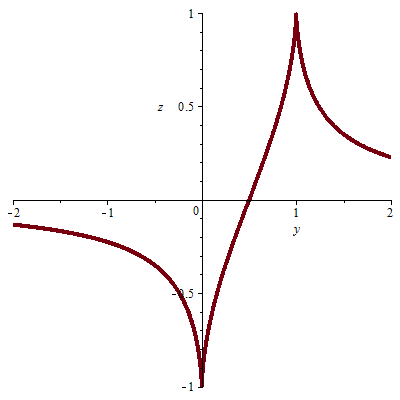}
\end{center}
\caption{Left: surface of revolution $\S$. Right: profile curve $\c^{\rm P}$ (Example \ref{ex-19})}
\end{figure}
$\c^{\rm P}\neq \c^{\rm M}$, and $\r^{\rm P}$ is not normal. Its critical point is $(0,1,1)$. Since $(0,-1,1)\not\in \c^{\rm P}$, Algorithm 1 returns $\r^{\rm M}\cup \cir(1,1)$.
\end{example}

\section{Extensions and future work}\label{sec-extensions}

In the previous sections, we have analyzed the missing area of the surface of revolution $\S$, when the parametrization $\P(s,t)$ takes values in the complex plane $\C^2$. Additionally, one could study the same problem but considering that the values of the parameters are taken only in the real plane $\R^2$. In \cite{Sendra2002a} this problem is analyzed for the case of plane curves, proving that, unless the curve has isolated singularities, one can always parametrize the curve surjectively just with values over $\R$. As future work one may extend these results on curves to the case of surfaces of revolution. Observe that, if the profile curve $\c^{\rm P}$ has an isolated singularity, a real circle on $S$ is generated, and it is only reachable by $\P$ with complex parameter values.

In addition, one may study the missing sets of other surface constructions in CAD, for instance swung surfaces (see e.g. \cite{qin}, \cite{zhao}); note that ruled surfaces are approached in \cite{SendraSevillaVillarino2014C}.

\section{Conclusions}

In this paper we prove that, for surfaces of revolution, the critical set can be taken, in the worst case, as the union of a curve and a circle; in Table \ref{tabla} we describe the possible critical sets. This set is easily and directly computable from the input profile curve parametrization. Moreover, the computations needed to deduce and describe the critical set are all done over the ground field and only require polynomial gcds, hence all is handled by means of linear algebra techniques. If $n$ is the degree of the profile parametrization $\r^{\rm P}$, the methods require $n$ gcds of degree $n$ univariate polynomials, and therefore the complexity of the method is dominated by $n^3$. Note that the complexity of the direct method is exponential.

\para

\begin{center}
\begin{tabular}{|c|c|c|c|}
  \hline
  & & &\\
  {\bf Is $\r^{\rm P}$ symmetric?} & {\bf Is $\r^{\rm P}$ normal?} & {\bf Real-critical set} & {\bf Example} \\
  \hline
  \hline
  Yes & Yes & Empty set & \ref{ex-11}\\
  \hline
  Yes & No & The critical point & \ref{ex-13}\\
  \hline
  No & Yes & The mirror curve & \ref{ex-15}\\
  \hline
  No & No & $\begin{array}{c} \mbox{The mirror curve union} \\
  \mbox{the cross section circle} \\  \mbox{at the critical point} \end{array}$ & \ref{ex-19}\\
  \hline
\end{tabular}
\captionof{table}{Possible real-critical sets}\label{tabla}
\end{center}

\section*{Acknowledgments}

This work was developed, and partially supported, by the Spanish \emph{Ministerio de Eco\-no\-m{\'\i}a y Competitividad} under Project MTM2011-25816-C02-01; as well as Junta de Extremadura and FEDER funds (group FQM024). The first and third authors are members of the
Research Group ASYNACS (Ref. CCEE2011/R34). The second author is a member of the research group GADAC (U. of Extremadura).

\section*{Appendix: proofs of the main results}

Let $f(y,z)$ be the defining polynomial of the profile curve $\c^{\rm P}$. In \cite{SanSegundo-Sendra} a description of the defining polynomial of $\S$ is given in terms of $f$. Collecting terms of odd and even degree in $y$ we can write $f(y,z)=A(y^2,z)+yB(y^2,z)$.
The implicit equation of $\S$ depends on whether $B$ is zero or not (i.e.  on whether $\c^{\rm P}$ is symmetric or not):
\begin{itemize}
 \item[] {\sf [Symmetric case]} $F(x,y,z)=A(x^2+y^2,z)$ when $B=0$.
 \item[] {\sf [Non-Symmetric case]} $F(x,y,z)=A^2(x^2+y^2,z)-(x^2+y^2)B^2(x^2+y^2,z)$ otherwise.
\end{itemize}

In the next lemma we study the level curves of $\S$.
\begin{lemma}\label{lemma-level}
 The intersection of $\S$ with the plane $z=c$ is either empty, or a finite union of circles $\cir(\alpha,c)$ with $\alpha\neq 0$, or the pair of lines $\{x\pm\ii y=0,\ z=c\}$. Moreover, if $(x_{0},y_{0},z_{0})\in \S$ where $x_{0}^2+y_{0}^2=\alpha^2\neq 0$, then $\cir(\alpha,z_{0})\subset \S$.
\end{lemma}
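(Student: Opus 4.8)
The plan is to exploit the fact, visible in the two displayed formulas for $F$, that in both the symmetric and the non-symmetric case the implicit equation of $\S$ involves $x$ and $y$ only through the combination $x^{2}+y^{2}$. So first I would record that there is a polynomial $G\in\C[u,z]$ with $F(x,y,z)=G(x^{2}+y^{2},z)$; concretely $G(u,z)=A(u,z)$ when $\c^{\rm P}$ is symmetric and $G(u,z)=A(u,z)^{2}-u\,B(u,z)^{2}$ otherwise. Fixing the height $c$ and setting $g(w):=G(w,c)\in\C[w]$, the intersection $\S\cap\{z=c\}$ is exactly $\{(x,y,c) : g(x^{2}+y^{2})=0\}$, so the whole statement is reduced to reading off the fibres $x^{2}+y^{2}=\beta$ over the roots $\beta$ of the univariate polynomial $g$.

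Next I would dispose of the degenerate possibility $g\equiv 0$. If that happened, $F$ would vanish on the entire plane $z=c$, so $\S=V(F)$ would contain that plane; intersecting with $x=0$ and using $F(0,y,z)=f(y,z)$ in the symmetric case and $F(0,y,z)=f(y,z)f(-y,z)$ in the non-symmetric case, one gets that the curve $\S\cap\{x=0\}$ (namely $\c^{\rm P}$, resp.\ $\c^{\rm P}\cup\c^{\rm M}$) contains the line $\{x=0,\ z=c\}$ and hence has it as an irreducible component; this forces $\c^{\rm P}$ (or its mirror, which in that situation is the same line) to be a line parallel to the $y$-axis, the case excluded in Section \ref{sec-results}. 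So $g$ is a nonzero polynomial, and I factor it over $\C$. If $g$ is a nonzero constant there are no roots and $\S\cap\{z=c\}=\emptyset$. Otherwise each root $\beta\neq 0$ contributes $\{x^{2}+y^{2}=\beta,\ z=c\}$, which is the circle $\cir(\alpha,c)$ for any square root $\alpha$ of $\beta$, with $\alpha\neq0$, while the root $\beta=0$ contributes $\{x^{2}+y^{2}=0,\ z=c\}=\{x+\ii y=0,\ z=c\}\cup\{x-\ii y=0,\ z=c\}$, the announced pair of lines. Thus $\S\cap\{z=c\}$ is a finite union of circles $\cir(\alpha,c)$ with $\alpha\neq 0$, possibly together with that pair of lines (and it reduces to just the pair of lines when $0$ is the only root), which is the trichotomy claimed.

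Finally, the ``moreover'' part is the easy half and I would obtain it directly from the same normal form: if $(x_{0},y_{0},z_{0})\in\S$ with $x_{0}^{2}+y_{0}^{2}=\alpha^{2}\neq 0$, then $G(\alpha^{2},z_{0})=F(x_{0},y_{0},z_{0})=0$, and since every point of $\cir(\alpha,z_{0})$ has the form $(x,y,z_{0})$ with $x^{2}+y^{2}=\alpha^{2}$, it satisfies $F(x,y,z_{0})=G(\alpha^{2},z_{0})=0$; hence $\cir(\alpha,z_{0})\subset\S$. The only step that needs real care is ruling out $g\equiv 0$: it is the one place where the hypothesis that $\c^{\rm P}$ is not a line parallel to the $y$-axis is genuinely used, together with the description of $\S\cap\{x=0\}$ as the profile curve union its mirror; everything else is a direct factorization argument.
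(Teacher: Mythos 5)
Your proof is correct and follows essentially the same route as the paper: both reduce the level set $\S\cap\{z=c\}$ to the roots of the univariate polynomial $g(\lambda)=G(\lambda,c)$ in $\lambda=x^2+y^2$ and read off circles versus the pair of lines from $\beta\neq 0$ versus $\beta=0$, and both get the ``moreover'' part from $F$ depending on $x,y$ only through $x^2+y^2$. The only difference is that you spell out why $g\not\equiv 0$ (via the excluded case of a profile line parallel to the $y$-axis), a point the paper dispatches in one remark.
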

\begin{proof}
Let us reason in the non-symmetric case; the symmetric case is similar. Let
$g(\lambda)=A^2(\lambda,c)-\lambda B^2(\lambda,c),$
where $\lambda=x^2+y^2$.  Then
\begin{enumerate}
\item if $g(\lambda)\in \C\setminus\{0\}$ then $\S\cap \{z=c\}=\{\emptyset\}$; observe that $g(\lambda)$ cannot be identically zero, because we have assumed that $\S$ is not a plane;
\item if $g(\lambda)\in \C[\lambda]\setminus\C$ and $g(0)=0$ then  $\{x\pm iy=0, z=c\}\subset \S$;
\item if $g(\lambda)\in \C[\lambda]\setminus\C$ and $g(\alpha^2)=0$ with $\alpha \neq 0$ then $\cir(\alpha, c)\subset \S$.
\end{enumerate}
Let $(x_{0},y_{0},z_{0})\in \S$ where $x_{0}^2+y_{0}^2=\alpha^2\neq 0$.  $\cir(\alpha,z_{0})$ can be parametrized as
$\rho(s):=(\rho_1(s),\rho_2(s),z_{0})$ with $\rho_1=2s \alpha/(s^2+1), \rho_2=\alpha(s^2-1)/(s^2+1)$.
Taking into account that $\rho_1(s)^2 +\rho_2(s)^2=\alpha^2$ one has
$F(\rho(s))=A^2(\alpha^2, z_{0})-\alpha^2B^2(\alpha^2, z_{0})=F(x_{0},y_{0},z_{0})=0$, so $\cir(\alpha, z_{0})\subset \S$.
\end{proof}

When a point $P\in\c^{\rm P}$ rotates around the $z$-axis it generates a circle in $\S$ except when $P$ belongs to the axis. In the following lemmas we analyze these cases.

\para

\begin{lemma}\label{lemma-pto-1}
 Let $P=\r^{\rm P}(t_0)$ with $p(t_{0})\neq 0$. Then $\cir(p(t_0), q(t_0))$ is  reachable by $\P$, except possibly the symmetric point $P_{*}=\r^{\rm M}(t_0)\in \c^{\rm M}$.
\end{lemma}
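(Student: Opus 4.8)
The plan is to read the statement directly off the definition of $\cir(p(t_0),q(t_0))$ together with the elementary facts about rational parametrizations of a circle recalled in Section~\ref{sec-results}.

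First I would observe that, by the definition of $\cir(\alpha,c)$, the circle $\cir(p(t_0),q(t_0))$ is precisely the image of the map
\[
 s \longmapsto \left(\frac{2s}{1+s^2}\,p(t_0),\ \frac{1-s^2}{1+s^2}\,p(t_0),\ q(t_0)\right),
\]
and this map is exactly $s\mapsto\P(s,t_0)$; it is well defined because $t_0$ lies in the domain of $\r^{\rm P}$. Hence, up to the usual loss of the value $s=\infty$, the whole circle $\cir(p(t_0),q(t_0))$ is contained in the image of $\P$.

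Next I would make precise which point is lost. Since $p(t_0)\neq 0$, the displayed map is a genuine rational parametrization of a circle of positive radius; arguing as in the unit-circle example of Section~\ref{sec-results}, given $(x_0,y_0,q(t_0))\in\cir(p(t_0),q(t_0))$ one solves $y_0(1+s^2)=p(t_0)(1-s^2)$, which yields $s^2=\bigl(p(t_0)-y_0\bigr)/\bigl(p(t_0)+y_0\bigr)$ and then the single value $s=x_0/\bigl(p(t_0)+y_0\bigr)$, valid exactly when $p(t_0)+y_0\neq 0$. On the circle, the equality $y_0=-p(t_0)$ forces $x_0=0$, so the only point not of this form is $(0,-p(t_0),q(t_0))$, which is the limit of $\P(s,t_0)$ as $s\to\infty$. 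Therefore every point of $\cir(p(t_0),q(t_0))$ other than $(0,-p(t_0),q(t_0))$ equals $\P(s,t_0)$ for some $s\in\C$, hence is reachable by $\P$. Finally I would identify $(0,-p(t_0),q(t_0))=\r^{\rm M}(t_0)=P_{*}\in\c^{\rm M}$, which is exactly the assertion.

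The only delicate point, and it is entirely routine, is checking that this circle parametrization misses exactly $(0,-p(t_0),q(t_0))$ and nothing else; in particular the two values of $s$ at which the common denominator $1+s^2$ vanishes must be verified not to produce a genuine gap in the affine circle, which they do not, since they correspond to the line at infinity. No deeper obstacle is expected: the lemma is a bookkeeping step isolating the single point on each cross-section circle that the slice $s\mapsto\P(s,t_0)$ cannot attain, to be recovered later (when possible) by varying $t$.
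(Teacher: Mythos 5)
Your proof is correct and follows essentially the same route as the paper: restrict $\P$ to $t=t_0$, recognize $\P(s,t_0)$ as the standard rational parametrization of $\cir(p(t_0),q(t_0))$, and identify the unique unreachable point as $(0,-p(t_0),q(t_0))=\r^{\rm M}(t_0)$. The only difference is that where the paper cites Theorem 2 of \cite{Sendra2002a} for the fact that this parametrization misses exactly that one point, you verify it by the direct computation $s=x_0/(p(t_0)+y_0)$, which is a perfectly valid substitute.
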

\begin{proof} Since $p(t_0)\neq 0$, then $\cir(p(t_{0}), q(t_{0}))$  can be parametrized as $\P(s,t_{0})$,
which covers the whole circle
with the exception of $P_{*}=\r^{\rm M}(t_0)$ (see Theorem 2 in \cite{Sendra2002a}).
\end{proof}

\para

\begin{lemma}\label{lema rectas no alcanzables}
$(0,0,z_0)\in \c^{\rm P}$ if and only if
 $\{x\pm \ii y=0, z=z_0\}\subset \S$. Moreover,
 \begin{enumerate}
 \item[(i)] If $(0,0,z_0)\in \c^{\rm P}$, the  lines $\{x\pm \ii y=0, z=z_0\}$ are not reachable by $\P$, except possibly the point $(0,0,z_0)$.
 \item[(ii)] If $(x_0,y_0,z_0)\in\S$ with $x_0^2+y_0^2=0$, then $(0,0,z_0)\in\c^{\rm P}$.
 \item[(iii)] If $(x_0,y_0,z_0)\in\S$ with $x_0^2+y_0^2=0$, then $\{x\pm \ii y=0, z=z_0\}\subset \S$.
 \end{enumerate}
\end{lemma}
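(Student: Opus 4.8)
The plan is to reduce the whole statement to a single evaluation of the implicit equation of $\S$ (as recorded at the start of the appendix) along the isotropic locus $L_{z_0}:=\{x^{2}+y^{2}=0,\ z=z_0\}$; note that $L_{z_0}$ is exactly the union of the two lines $\{x+\ii y=0,\ z=z_0\}$ and $\{x-\ii y=0,\ z=z_0\}$. Since the implicit equation $F$ of $\S$ involves $x,y$ only through $x^{2}+y^{2}$, its restriction to $L_{z_0}$ is independent of $(x,y)$: in the symmetric case $F|_{L_{z_0}}=A(0,z_0)$, while in the non-symmetric case $F|_{L_{z_0}}=A^{2}(0,z_0)-0\cdot B^{2}(0,z_0)=A(0,z_0)^{2}$. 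In particular $F$ is constant on $L_{z_0}$, so either the whole pair of lines lies on $\S$ or none of it does, and $L_{z_0}\subset\S$ if and only if $A(0,z_0)=0$. On the other hand, from $f(y,z)=A(y^{2},z)+yB(y^{2},z)$ we get $f(0,z_0)=A(0,z_0)$, so $(0,0,z_0)\in\c^{\rm P}$ if and only if $A(0,z_0)=0$. Chaining the two equivalences proves the ``if and only if'' in the statement; and the same evaluation gives (ii) and (iii) at once, since if $(x_0,y_0,z_0)\in\S$ with $x_0^{2}+y_0^{2}=0$ then $A(0,z_0)^{k}=F(x_0,y_0,z_0)=0$ (with $k\in\{1,2\}$), hence $A(0,z_0)=0$, hence $f(0,z_0)=0$, i.e.\ $(0,0,z_0)\in\c^{\rm P}$ (part (ii)), and $L_{z_0}\subset\S$ (part (iii)).

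For part (i) I would work directly with $\P$. Suppose a point $(x_0,y_0,z_0)$ of $L_{z_0}$ (so $x_0^{2}+y_0^{2}=0$) is reached, say $(x_0,y_0,z_0)=\P(s_0,t_0)$ with $1+s_0^{2}\neq 0$. Using the identity $(2s_0)^{2}+(1-s_0^{2})^{2}=(1+s_0^{2})^{2}$ one computes
\[
x_0^{2}+y_0^{2}=\left(\left(\frac{2s_0}{1+s_0^{2}}\right)^{2}+\left(\frac{1-s_0^{2}}{1+s_0^{2}}\right)^{2}\right)p(t_0)^{2}=p(t_0)^{2},
\]
so $p(t_0)=0$ and therefore $\P(s_0,t_0)=(0,0,q(t_0))=(0,0,z_0)$. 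Thus the only point of the two lines that $\P$ can reach is $(0,0,z_0)$ itself, which is precisely what (i) asserts. Degenerate parameter values ($s_0=\pm\ii$, or $t_0$ a pole of $p$ or $q$) can only contribute limit points of $\P$; along any path approaching such a value the identity $\P_{1}^{2}+\P_{2}^{2}=p^{2}$ still holds, forcing $p\to 0$, so those limits are again $(0,0,z_0)$ and the conclusion is unchanged.

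The computations above are routine; the only step needing care is the use of the explicit form of the defining polynomial of $\S$, so that membership in $\S$ is equivalent to vanishing of $F$ --- this is exactly the description borrowed from \cite{SanSegundo-Sendra} and set up at the beginning of the appendix. The order I would follow is: (a) evaluate $F$ on $L_{z_0}$, obtaining the main equivalence together with (ii) and (iii); then (b) carry out the short parametrization computation for (i). The minor bookkeeping of degenerate parameter values in (i) is handled by the continuity remark above.
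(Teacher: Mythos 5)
Your proof is correct and follows essentially the same route as the paper: evaluate the implicit equation $F$ (which depends on $x,y$ only through $x^2+y^2$) on the isotropic locus to get the equivalence together with (ii) and (iii), and use the identity $\P_1^2+\P_2^2=p(t)^2$ to force $p(t_0)=0$ for (i). Your version just makes explicit the computation the paper leaves as a one-line remark for (i); the closing comment about degenerate parameter values is unnecessary (reachability concerns actual parameter values, not limits) but harmless.
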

\begin{proof}
\noindent $1 \Rightarrow 2 )$ The lines $\{x\pm iy=0, z=z_0\}$ are parametrized by $\ell(\lambda)=(\mp \lambda \ii, \lambda, z_{0})$. Let us see that $\ell(\lambda)\subset \S$. Since $(0,0,z_0)\in \c^{\rm P}$ then $f(0,z_{0})=A(0,z_{0})=0$ (in the symmetric case) or $f(0,z_{0})=A^2(0,z_{0})=0$ (in the non-symmetric case). Moreover, since $(\mp \lambda \ii)^2+\lambda^2=0$, then $F(\ell(\lambda))=A(0,z_{0})=0$ and therefore $\ell(\lambda)\subset \S$.

\noindent $2\Rightarrow 1)$ Since $(0,0,z_{0})\in \S$ then $F(0,0,z_{0})=0$, and so $A(0,z_0)=0$. Thus, $f(0,z_0)=A(0,z_0)=0$ what implies that $(0,0,z_0)\in \c^{\rm P}$.

\noindent (i)  follows from the fact that the equality  $\P(s,t)=\ell(\lambda)$ only holds  for $\lambda =0$ and when $(0,0,z_0)$ is reachable by $r(t)$.

\noindent The proof of (ii) is analogous to the proof of the implication (2) $\Rightarrow$ (1).

\noindent (iii) follows from (ii) and the above equivalence.
\end{proof}

\begin{lemma}\label{lema puntos en c}
 Let $(x_0,y_0,z_0)\in \S$. Then $P^+=(0,\sqrt{x_{0}^{2}+y_{0}^{2}},z_0)\in \c^{\rm P}$ or $P^-=(0,-\sqrt{x_{0}^{2}+y_{0}^{2}},z_0)\in \c^{\rm P}$.
\end{lemma}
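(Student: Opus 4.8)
The plan is to reduce the statement to a purely one-variable algebraic fact about the polynomials $A$ and $B$ appearing in the factorization $f(y,z)=A(y^2,z)+yB(y^2,z)$. Set $\lambda_0 = x_0^2+y_0^2$ and $c=z_0$, and let $\beta = \sqrt{\lambda_0}$ so that $P^\pm = (0,\pm\beta,c)$. First I would dispose of the degenerate case $\lambda_0 = 0$: then $P^+=P^-=(0,0,c)$ and, by Lemma \ref{lema rectas no alcanzables}(ii), $(x_0,y_0,z_0)\in\S$ with $x_0^2+y_0^2=0$ already forces $(0,0,c)\in\c^{\rm P}$, so the claim holds. Hence assume $\lambda_0\neq 0$, so $\beta\neq 0$ and $P^+\neq P^-$.

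Next I would use the implicit equations of $\S$ recalled just before Lemma \ref{lemma-level}. In the non-symmetric case, $(x_0,y_0,z_0)\in\S$ means $A^2(\lambda_0,c) - \lambda_0 B^2(\lambda_0,c) = 0$, i.e. $A^2(\lambda_0,c) = \lambda_0 B^2(\lambda_0,c) = \beta^2 B^2(\lambda_0,c)$, so $A(\lambda_0,c) = \pm\beta\, B(\lambda_0,c)$. Now observe that membership of $(0,\pm\beta,c)$ in $\c^{\rm P}$ is governed by $f(\pm\beta,c) = A(\beta^2,c) \pm \beta B(\beta^2,c) = A(\lambda_0,c)\pm\beta B(\lambda_0,c)$ (using $\beta^2 = \lambda_0$). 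From the relation $A(\lambda_0,c) = \pm\beta B(\lambda_0,c)$ one of the two expressions $A(\lambda_0,c)+\beta B(\lambda_0,c)$ or $A(\lambda_0,c)-\beta B(\lambda_0,c)$ vanishes, which says exactly that $P^+\in\c^{\rm P}$ or $P^-\in\c^{\rm P}$. In the symmetric case $B=0$, and $(x_0,y_0,z_0)\in\S$ gives $A(\lambda_0,c)=0$, hence $f(\pm\beta,c)=A(\lambda_0,c)=0$ and in fact both $P^+$ and $P^-$ lie on $\c^{\rm P}$; so the disjunction holds trivially.

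There is one subtlety I would need to handle carefully: the factorizations $F(x,y,z)=A^2(x^2+y^2,z)-(x^2+y^2)B^2(x^2+y^2,z)$ (resp. $A(x^2+y^2,z)$) describe $\S$ only up to the issue of whether $F$ is exactly the defining polynomial or a power/multiple of it, and whether $A,B$ are coprime. This is exactly the content of the description of the implicit equation of $\S$ from \cite{SanSegundo-Sendra} quoted in the appendix, so I would simply invoke that: $(x,y,z)\in\S$ is equivalent to $F(x,y,z)=0$ with $F$ as given, and likewise $(0,y,z)\in\c^{\rm P}$ is equivalent to $f(y,z)=0$. With that in hand the argument above is just the elementary observation $u^2=v^2 \iff (u-v)(u+v)=0$ applied to $u=A(\lambda_0,c)$, $v=\beta B(\lambda_0,c)$, together with the evaluation $f(\pm\beta,c)=A(\lambda_0,c)\pm\beta B(\lambda_0,c)$. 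The main (very mild) obstacle is therefore bookkeeping: making sure the square root $\beta$ is chosen consistently and that the case $\lambda_0=0$ is routed through Lemma \ref{lema rectas no alcanzables} rather than through the square-root computation, since then $\beta B(\lambda_0,c)=0$ automatically and the disjunction degenerates to the single condition $A(0,c)=0$.
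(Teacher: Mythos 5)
Your proposal is correct and follows essentially the same route as the paper: evaluate the implicit equation $F$ at $(x_0,y_0,z_0)$, factor $A^2(\lambda_0,c)-\lambda_0 B^2(\lambda_0,c)$ as $\bigl(A(\lambda_0,c)-\beta B(\lambda_0,c)\bigr)\bigl(A(\lambda_0,c)+\beta B(\lambda_0,c)\bigr)$, and identify the two factors with $f(\mp\beta,c)$. The only difference is that you route the case $\lambda_0=0$ through Lemma \ref{lema rectas no alcanzables}(ii), whereas the paper's computation covers it directly since then both factors reduce to $A(0,c)$; this is harmless.
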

\begin{proof} Let $\alpha=\sqrt{x_{0}^{2}+y_{0}^{2}}$. If the implicit equation of $\S$ is $F(x,y,z)=A(x^2+y^2,z)$, then $F(x_0,y_0,z_0)=A(\alpha^2,z_0)=f(\pm \alpha, z_0)=0,$
and  $P^{\pm}\in \c^{\rm P}$. If the implicit equation of $\S$ is $F(x,y,z)=A^2(x^2+y^2,z)-(x^2+y^2)B^2(x^2+y^2,z)$, then
$F(x_0,y_0,z_0)=A^2(\alpha^2,z_0)-\alpha^2B^2(\alpha^2,z_0)=(A(\alpha^2,z_0)-\alpha B(\alpha^2,z_0))(A(\alpha^2,z_0)+\alpha B(\alpha^2,z_0))=0.$
When the first factor vanishes  then $P^-\in\c^{\rm P}$, and if the second vanishes then $P^+\in\c^{\rm P}$.
\end{proof}

\begin{lemma}\label{lema circulo alcanzable}
 Let $\cir(\alpha,c)\subset \S$, with $\alpha\neq 0$, and let $P_1=(0,\alpha,c)$, $P_2=(0,-\alpha,c)$. The following statements are equivalent:
\begin{enumerate}
\item  $\cir(\alpha,c)$ contains at least one point reachable by $\P$.
\item  $\cir(\alpha,c)$ is reachable by $\P$ except, at most, one of the points $P_i$.
\item One of the points $P_i$ is reachable by $\r^{\rm P}(t)$.
\end{enumerate}
\end{lemma}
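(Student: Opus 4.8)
The plan is to prove the chain of implications $(2)\Rightarrow(1)\Rightarrow(3)\Rightarrow(2)$, relying mainly on Lemmas \ref{lemma-pto-1} and \ref{lema puntos en c}. The implication $(2)\Rightarrow(1)$ is essentially immediate: if $\cir(\alpha,c)$ is covered by $\P$ except for at most one point, then since a circle has infinitely many points, at least one of them is reachable, giving $(1)$. For $(1)\Rightarrow(3)$, suppose $Q=(x_0,y_0,c)\in\cir(\alpha,c)$ is reachable, say $Q=\P(s_0,t_0)$. Then by construction of $\P$ the second and third coordinates force $q(t_0)=c$ and $\sqrt{x_0^2+y_0^2}=|p(t_0)|$, so $p(t_0)=\pm\alpha$, i.e. $\r^{\rm P}(t_0)$ is either $P_1$ or $P_2$; hence one of the $P_i$ is reachable by $\r^{\rm P}(t)$. (One must note here that $p(t_0)\neq 0$ since $\alpha\neq 0$, so there is no degenerate case.)

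For $(3)\Rightarrow(2)$, assume without loss of generality that $P_1=(0,\alpha,c)=\r^{\rm P}(t_0)$ for some $t_0$, so in particular $p(t_0)=\alpha\neq 0$ and $q(t_0)=c$. Then $\cir(\alpha,c)=\cir(p(t_0),q(t_0))=\P(s,t_0)$, and by Lemma \ref{lemma-pto-1} this parametrization covers the whole circle with the possible exception of the single mirror point $\r^{\rm M}(t_0)=(0,-\alpha,c)=P_2$. Thus $\cir(\alpha,c)$ is reachable by $\P$ except at most the one point $P_2$, which is exactly statement $(2)$. (If instead $P_2$ is the one reachable by $\r^{\rm P}$, the same argument with $p(t_0)=-\alpha$ shows the circle is covered except possibly $P_1$.)

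I expect the main subtlety — rather than a genuine obstacle — to be bookkeeping the direction of the argument in $(1)\Rightarrow(3)$: one has to be careful that a point of $\cir(\alpha,c)$ being in the image of $\P$ really does pin down a parameter value $t_0$ with $\r^{\rm P}(t_0)\in\{P_1,P_2\}$, using that $s\mapsto(2s\alpha/(1+s^2),\alpha(1-s^2)/(1+s^2))$ is the circle parametrization and that the third coordinate of $\P$ depends only on $t$. Everything else reduces to invoking Lemma \ref{lemma-pto-1} (for the covering of a cross-section circle up to its mirror point) and the elementary fact that a circle is infinite. No new computation beyond what is already set up in the earlier lemmas should be needed.
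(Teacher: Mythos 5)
Your proof is correct and uses the same key ingredients as the paper (the identity $x_0^2+y_0^2=p(t_0)^2$ for points $\P(s_0,t_0)$, the fact that the third coordinate is $q(t_0)$, and Lemma \ref{lemma-pto-1} for covering a cross-section circle up to its mirror point); the only difference is that you close the cycle as $2\Rightarrow1\Rightarrow3\Rightarrow2$ while the paper does $1\Rightarrow2\Rightarrow3\Rightarrow1$. Your ordering even spares you the small extra computation in the paper's $2\Rightarrow3$ step, where it must additionally deduce $s_0=0$ to conclude $P_1=\r^{\rm P}(t_0)$.
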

\begin{proof}
\noindent $1\Rightarrow 2)$ Let $\P(s_0,t_0)=(x_0,y_0,c)\in \cir(\alpha,c)$.
Since $x_0^2+y_0^2=p^2(t_0)=\alpha^2\neq 0$, then
$\P(s,t_0)$
parametrizes $\cir(\alpha,c)$. Therefore it is reachable, except for the point $(0,-p(t_0),c)$. If $p(t_0)=\alpha$ we miss at most $P_2$, and if $p(t_0)=-\alpha$ at most the point $P_1$ is missed.

\noindent $2\Rightarrow 3)$ We assume w.l.o.g. that $P_1=\P(s_0,t_0)=(0,\alpha,c)$.
Taking into account that
\[
\left(\frac{2s_0}{1+s_0^2}p(t_{0})\right)^2+\left(\frac{1-s_0^2}{1+s_0^2}p(t_{0})\right)^2=p^2(t_0)=\alpha^2\neq 0\,\,\,\mbox{and}\,\,\, \frac{2s_0}{1+s_0^2}p(t_{0})=0
\]
one has that $s_0=0$, $p(t_0)=\alpha$, and hence $\r^{\rm ^P}(t_0)=P_1$.

\noindent $3\Rightarrow 1)$ We assume w.l.o.g. that $P_1=\r^{\rm P}(t_0)$, that is, $(0,\alpha,c)=(0,p(t_0),q(t_0))$. Then $P_1=\P(0,t_0)\in\cir(\alpha,c)$.
\end{proof}
\para

In this situation, let $\crit$ denote a critical set of $\P$, and let $N=(x_0,y_0,z_0)\in \S$ be non-reachable. If $x_{0}^{2}+y_{0}^{2}=0$, by Lemma \ref{lema rectas no alcanzables}, $N\in \{x\pm \ii y =0, z=z_0\}\subset \crit$ and $(0,0,z_0)\in \c^{\rm P}$.
 Let $x_{0}^{2}+y_{0}^{2}=\alpha^2\neq 0$. By Lemma \ref{lemma-level}, $\cir(\alpha,z_0)\subset \S$ and, by Lemma \ref{lema puntos en c}, $P^+=(0,\alpha,z_0)\in\c^{\rm P}$ or $P^-=(0,-\alpha,z_0)\in \c^{\rm P}$. We distinguish two cases:

\noindent (1) Assume $P^{\pm}\in \c^{\rm P}$. Note that, by \cite[Theorem 2]{Sendra2002a}, al least one of them is reachable by $\r^{\rm P}(t)$, and hence by $\P(s,t)$. Assume  w.l.o.g. that $P^+$ is reachable by $\r^{\rm P}$. By Lemma \ref{lema circulo alcanzable}, $\cir(\alpha,z_0)$ is reachable by $\P$ with the possible exception of $P^-$. Since $N\in \cir(\alpha,z_0)$ and it is non-reachable, then $P^-=N$. Moreover, $P_*=P^-$ (see Lemma \ref{lemma-pto-1} for the definition of $P_*$). Thus, $N\in \c^{\rm M}\subset \crit$.

\noindent (2) Assume either $P^+$ or $P^-$ belong to $\c^{\rm P}$. Say  w.l.o.g. that $P^+\in \c^{\rm P}$:
 if $P^+=\r^{\rm P}(t_0)$, by Lemma \ref{lemma-pto-1}, $\cir(\alpha,z_0)$ is reachable except at $P^-=N$. So, $N\in \c^{\rm M}\subset \crit$. On the other hand, if $P^+$ is not reachable by $\r^{\rm P}(t)$, by Lemma \ref{lema circulo alcanzable}, $N\in \cir(\alpha,z_0)\subset \crit$.

As a consequence of this analysis we have proven Theorems \ref{theorem-sym-real}, \ref{theorem-non-sym-real}, \ref{theorem-complex}.

\end{document}